\newtheorem{thm}[theorem]{Theorem}
\newtheorem{prop}[theorem]{Proposition}
\newcommand{\cost}{\mathbf{c}}
\newcommand{\dost}{\mathbf{d}}
\newcommand{\limcost}{\underline{\cost}}
\newcommand{\DII}{\Delta^0_2}
\newcommand{\NN}{{\mathbb{N}}}
\newcommand{\RR}{{\mathbb{R}}}
\newcommand{\R}{{\mathbb{R}}}
\newcommand{\sub}{\subseteq}
\newcommand{\sN}[1]{_{#1\in \omega}}
\newcommand{\ML}{Martin-L{\"o}f}
\newcommand{\SI}[1]{\Sigma^0_{#1}}
\newcommand{\PI}[1]{\Pi^0_{#1}}
\newcommand{\PPI}{\PI{1}}
\newcommand{\bi}{\begin{itemize}}
\newcommand{\ei}{\end{itemize}}
\newcommand{\bc}{\begin{center}}
\newcommand{\ec}{\end{center}}
\newcommand{\Halt}{{\ES'}}
\newcommand{\ES}{\emptyset}
\newcommand{\ria}{\rightarrow}
\newcommand{\tp}[1]{2^{#1}}
\newcommand{\ex}{\exists}
\newcommand{\fa}{\forall}
\newcommand{\seqcantor}{2^\NN}
\newcommand{\cantor}{\seqcantor}
\newcommand{\leT}{\le_{\mathrm{T}}}
\newcommand{\MLR}{\mbox{\rm \textsf{MLR}}}
\newcommand{\Weaktwo}{\mbox{\rm \textsf{W2R}}}
\newcommand{\CR}{\mbox{\rm \textsf{CR}}}
\newcommand{\Low}{\mbox{\rm \textsf{Low}}}
\newcommand{\n}{\noindent}
\newcommand{\leb}{\mathbf{\lambda}}
\newcommand{\frd}{\mathfrak{d}}
\newcommand \seq[1]{{\left\langle{#1}\right\rangle}}
\newcommand\+[1]{\mathcal{#1}}
\newcommand{\sC}{\+ C}
\newcommand{\ol}{\overline}
\newcommand{\ul}{\underline}
\newcommand{\lra}{\leftrightarrow}
\newcommand{\RA}{\Rightarrow}
\begin{document}
\title{Lowness, randomness, and computable analysis}
\author{Andr\'e Nies}
 \institute{Department of Computer Science,  
University of Auckland}

\maketitle

 \begin{abstract} Analytic concepts  contribute to  our understanding of randomness of  reals via algorithmic tests. They also  influence the interplay between randomness   and lowness notions. We provide a survey.  \end{abstract}
\section{Introduction}  Our basic objects of study are infinite bit sequences, identified with sets of natural numbers, and often simply called sets. A  lowness notion  provides a sense in which a set $A$  is close to computable. For example, $A$ is \emph{computably dominated} if each function computed by A is dominated by  a computable function; $A$ is \emph{low} if the halting problem relative to $A$  has   the least possible Turing complexity, namely $A' \equiv_T \Halt$.  These two notions are incompatible outside the computable sets, because every non-computable $\DII$ set has hyperimmune degree. 

Lowness notions have been studied for at least 50 years  \cite{Spector:56,Martin.Miller:68,Jockusch.Soare:72}.
More recently, and perhaps suprisingly, ideas motivated by the intuitive notion of randomness have  been  applied  to the  investigation of   lowness. On the one hand, these ideas have  led to new lowness notions. For instance, $K$-triviality of a set of natural numbers (i.e., being far from random in a specific sense) coincides with lowness for \ML\ randomness, and many other notions. 				
On the other hand, they have  been applied   towards a deeper understanding of previously known lowness notions. Randomness   led to the study of  an important subclass of the computably dominated sets,   the computably traceable sets~\cite{Terwijn.Zambella:01}.	Superlowness of an oracle $A$, first studied by Mohrherr~\cite{Mohrherr:86},  says that $A' \equiv_{tt} \Halt$; despite the fact that   the low basis theorem \cite{Jockusch.Soare:72} actually yields  superlow 	sets, the importance of superlowness  was not fully appreciated until the investigations of lowness via randomness. For instance,  every $K$-trivial  set is superlow~\cite{Nies:AM}. 
				
				Computable analysis allows us to characterise  several    randomness notions that were  originally defined in terms of algorithmic tests. Schnorr~\cite{Schnorr:75} introduced two randomness notions for a bit sequence $Z$  via    the failure of  effective betting strategies. Nowadays they are called computable  randomness and  Schnorr randomness. Computable randomness says that no effective betting strategy (martingale) succeeds on $Z$, Schnorr randomness that no such strategy succeeds quickly (see \cite{Downey.Hirschfeldt:book,Nies:book} for background). Pathak~\cite{Pathak:09}, followed by Pathak, Rojas and Simpson~\cite{Pathak.Rojas.ea:12} characterised Schnorr randomness: $Z$ is Schnorr random iff an effective version of the Lebesgue differentiation theorem holds at the    real $z\in [0,1]$ with binary expansion~$Z$.   Brattka, Miller and Nies~\cite{Brattka.Miller.ea:16}  showed that   $Z$  is computably random if and only if every nondecreasing computable function is differentiable at  $z$.	 See Section~\ref{s:analysis}		for detail.		
					 	
				From    2011 onwards, concepts from analysis have also influenced    the interplay of lowness and randomness. The Lebesgue density theorem for effectively closed sets  $\+ C$  provides two   randomness notions  for a bit sequence $Z$  which are slightly stronger than \ML's. In the stronger form, the density of any such set $\+ C$  that contains $Z$  has to  be 1 at $Z$; in the weak form, the density is merely required to be  positive. One has to  require  separately  that $Z$ is ML-random  (even the stronger   density condition doesn't imply this, for instance because a 1-generic also satisfies this   condition). These two notions have  been used to obtain a Turing incomplete \ML\ random above all the K-trivials, thereby    solving the so-called ML-covering problem. We give more detail in Section~\ref{s:LDT-K};  also see the survey~\cite{Bienvenu.Day.ea:14}.

In current research, concepts inspired by analysis are used to stratify lowness notions. Cost functions describe  a dense hierarchy of subideals of the K-trivial Turing degrees (Section~\ref{s:costfunctions}).   The Gamma   and Delta parameters are  real numbers assigned to   a Turing degree. They provide a coarse   measure of  its complexity in terms of the asymptotic density of bit sequences (Section~\ref{s:gamma_delta}).  

The present paper surveys the study of lowness notions via randomness. In  Sections~\ref{s:early} and~\ref{s:redness} we   elaborate on the   background in lowness, and how it was influenced by randomness. Section~\ref{s:analysis} traces the interaction   of computable analysis and randomness from Lebesgue to the present.   Section~\ref{s:LDT-K} shows how some of the advances via computable analysis aided the understanding of lowness through randomness. Sections \ref{s:costfunctions} and~\ref{s:gamma_delta}   dwell on  the   most recent developments. A final section contains open questions. 
 
 \section{Early days of lowness} 
 \label{s:early}
 Spector~\cite{Spector:56} was the first to construct a  Turing degree that is minimal among the nonzero degrees. Sacks~\cite{Sacks:61} showed that such a degree can be  $\DII$. Following these results, as well as  the Friedberg-Muchnik theorem and   Sacks' result~\cite{Sacks:64} that the c.e.\ Turing degrees are dense, the interest of early computability theorists focussed on relative complexity of sets:  comparing their complexity via an appropriate reducibility. Absolute complexity, which means finding  natural lowness classes and studying membership in them,  received somewhat less attention, and was mostly restricted to classes defined via the Turing jump. Martin and Miller~\cite{Martin.Miller:68} built a perfect closed set of computably dominated oracles. Jockusch and Soare \cite{Jockusch.Soare:72} proved that every non-empty effectively closed set contains a low oracle. These constructions used recursion-theoretic versions of forcing. Jockusch   published  papers such as~\cite{Jockusch:68,Jockusch:69a} that explored notions such as  degrees of   diagonally noncomputable functions, and degrees of bi-immune sets.  
 Downey's     work in the   1980s  was    important for the development of our understanding of lowness. For instance, Downey and Jockusch~\cite{Downey.Jockusch:87}  studied complexity of sets, both relative and absolute,  using ever more sophisticated methods.  
 
\section{Randomness interacts with lowness} \label{s:redness}
We begin with the following randomness notions:
\bc weakly  2-random $\rightarrow$ ML-random  $\rightarrow$ computably rd. $\rightarrow$ Schnorr rd.  \ec
  $Z$ is weakly  2-random iff $Z$ is  in no null $\PI 2$ class.  Section~\ref{s:LDT-K} will develop  notions implied by  weak 2-randomness, and somewhat  stronger than ML-randomness.

Lowness can be used to understand randomness via the randomness enhancement principle~\cite{Nies:ICM}, which says that   sets already enjoying a randomness property get more random as they become  lower in the sense of computational complexity. Every non-high Schnorr random is ML-random. A ML-random is weakly 2-random iff it forms a Turing minimal pair with $\ES'$. See~\cite{Downey.Hirschfeldt:book,Nies:book}.

Here we are mostly interested in the converse interaction:  studying lowness via randomness. 
Let $K(x)$ denote the prefix free version of Kolmogorov complexity of  a binary string $x$. The $K$-trivial sets  were introduced by Chaitin~\cite{Chaitin:76} and   studied by Solovay in an unpublished manuscript~\cite{Solovay:75}, rediscovered  by Calude in the 1990s. Most of this manuscript  is    covered in  Downey and Hirschfeldt's  monumental work~\cite{Downey.Hirschfeldt:book}.   We say that $A$ is $K$-trivial if $\ex b \fa n \, K(A\mid n) \le K(n) +b$. By the Levin-Schnorr characterisation, $Z$ is ML-random iff $\ex d \fa n \, K(Z\mid n) \ge n-d$. Since $K(n) \le \log_2 n+ O(1)$, this definition  says  that $K$-trivials are far from random. Each computable set is $K$-trivial; Solovay built a  $K$-trivial $\DII$ set $A$ that is not computable. This was later improved to a c.e.\ set $A$ by Downey et al. \cite{Downey.Hirschfeldt.ea:03}, who used what became later  known as  a cost function construction.
 
An oracle $A$ is called \emph{low for a randomness notion} $\+ C$ if every $\+ C$-random set is already $\+ C$-random relative to $A$. $K$-triviality appears to be  the universal lowness class for randomness notions based on c.e.\ test notions.   $A$ is $K$-trivial iff  $A \in \Low(\Weaktwo,\CR) $, namely every weakly 2-random set is computably random relative to~$A$. This was shown by the author~\cite[8.3.14]{Nies:book} extending the result that $ \Low(\Weaktwo,\MLR) $ coincides with $K$-triviality \cite{Downey.Nies.ea:06}. As a consequence,  for any  randomness   notion $\+ D$ in between weak-2 randomness and computable randomness, lowness for $\+ D$ implies $K$-triviality. For many notions, e.g.\ weak-2 randomness \cite{Downey.Nies.ea:06} and ML-randomness~\cite{Nies:AM}, the classes actually coincide.

Some of  the $K$-trivial story, including the   roles Downey, Hirschfeldt and the author  have played in it, is   vividly  described in  \cite{Nies:5questions}. Background and  more detailed  coverage for the developments up to 2009 can be found in   the aforementioned books \cite{Downey.Hirschfeldt:book,Nies:book}.
 
 \section{Randomness and computable analysis} \label{s:analysis}
 We discuss the influence exerted by computable analysis    on the study of randomness notions. Thereafter we will return to our main topic, lowness notions.
 
  Analysis and ergodic theory have plenty of theorems saying that a property of being well-behaved holds at almost every point.  Lebesgue proved that a function of bounded variation  defined on the unit interval  is differentiable at almost every point.  He also proved the density theorem, and the stronger differentiation theorem, that now both bear his name. The density theorem says that   a measurable set $\+ C \sub [0,1]$ has density one at  almost every of its members $z$. To have density one at $z$ means intuitively that many points close to $z$ are also in $\+ C$, and this becomes more and more apparent as one ``zooms in" on $z$: 
 \begin{definition} \label{def:LD} Let $\lambda$ denote Lebesgue measure on $\R$.
We define the lower Lebesgue density of a set $\sC \subseteq \R$ at a point~$z$ to be the limit inferior $$\ul \varrho(\sC | z):=\liminf_{|Q| \to 0} \frac{\lambda(Q \cap \sC)}{|Q|},$$
where $Q$ ranges over open intervals containing $z$.
The Lebesgue density of $\sC$ at $z$ is the limit (which may not exist)
$$ \varrho(\sC | z):=\lim_{|Q| \to 0} \frac{\lambda(Q \cap \sC)}{|Q|}.$$

\end{definition} 
 Note that  $0 \le \ul \varrho(\sC | z) \le 1$. 
  \begin{theorem}[Lebesgue \cite{Lebesgue:1909}] 
Let $\sC \subseteq \R$ be a measurable set. We have   $\ul \varrho(\sC | z)=1$ for almost every $z\in \sC$.
\end{theorem}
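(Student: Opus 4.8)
\emph{Proof idea.} The plan is to show that the ``bad set'' $E := \{z \in \sC : \ul\varrho(\sC \mid z) < 1\}$ is Lebesgue null, arguing by contradiction with the Vitali covering lemma as the engine. First I would reduce to a bounded, uniform situation: writing $E = \bigcup_{n \ge 1} \bigcup_{M \ge 1} E_{n,M}$ with $E_{n,M} := \{z \in \sC \cap (-M,M) : \ul\varrho(\sC \mid z) < 1 - 1/n\}$, it suffices to prove $\lambda^*(E_{n,M}) = 0$ for all $n,M$, where $\lambda^*$ denotes outer measure (we cannot assume $E_{n,M}$ is measurable a priori). So I fix $n,M$, abbreviate $E := E_{n,M}$ and $\alpha := 1 - 1/n \in (0,1)$, note that $\lambda^*(E) \le 2M < \infty$, and suppose toward a contradiction that $\lambda^*(E) > 0$.

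For the covering step I would fix $\varepsilon > 0$ and choose an open set $U$ with $E \subseteq U \subseteq (-M,M)$ and $\lambda(U) < \lambda^*(E) + \varepsilon$. For each $z \in E$ the inequality $\ul\varrho(\sC \mid z) < \alpha$ supplies open intervals $Q \ni z$ of arbitrarily small length with $\lambda(Q \cap \sC) < \alpha\,|Q|$; keeping only those contained in $U$, these intervals still form a Vitali cover of $E$. The Vitali covering lemma then yields a countable pairwise disjoint subfamily $Q_1, Q_2, \dots$ of this collection with $\lambda^*\bigl(E \setminus \bigcup_i Q_i\bigr) = 0$.

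The measure estimate closes the argument. Since the $Q_i$ are disjoint and lie in $U$, countable additivity gives
$$\lambda\Bigl(\sC \cap \bigcup_i Q_i\Bigr) = \sum_i \lambda(\sC \cap Q_i) < \alpha \sum_i |Q_i| \le \alpha\,\lambda(U) < \alpha\bigl(\lambda^*(E) + \varepsilon\bigr).$$
On the other hand $E \subseteq \sC$, and $E$ is contained in $\bigcup_i Q_i$ up to a null set, so monotonicity of outer measure gives $\lambda^*(E) \le \lambda^*\bigl(\sC \cap \bigcup_i Q_i\bigr) = \lambda\bigl(\sC \cap \bigcup_i Q_i\bigr)$. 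Chaining the two estimates, $\lambda^*(E) < \alpha(\lambda^*(E) + \varepsilon)$; letting $\varepsilon \to 0$ gives $\lambda^*(E) \le \alpha\,\lambda^*(E)$, and since $\alpha < 1$ and $\lambda^*(E)$ is finite this forces $\lambda^*(E) = 0$, contradicting $\lambda^*(E) > 0$. Hence every $E_{n,M}$, and thus $E$, is null: $\ul\varrho(\sC \mid z) = 1$ for almost every $z \in \sC$. Since $\lambda(Q \cap \sC)/|Q| \le 1$ always, the upper density is $\le 1$ too, so in fact the limit $\varrho(\sC \mid z)$ exists and equals $1$ for a.e.\ $z \in \sC$.

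I expect the only genuinely delicate point to be the covering step, more precisely invoking the Vitali covering lemma in the form: \emph{a Vitali cover of a set $S$ with $\lambda^*(S) < \infty$ admits a countable disjoint subfamily that covers $S$ up to a $\lambda$-null set.} This is exactly what lets the proof run without any measurability assumption on the bad set, and it is the classical substitute for the Hardy--Littlewood maximal inequality. An alternative route would be to establish that maximal inequality first, deduce the Lebesgue differentiation theorem for $L^1$ functions by approximating $\mathbf{1}_{\sC \cap (-M,M)}$ in $L^1$ by continuous functions, and then specialize to an indicator function; but the direct Vitali argument above is shorter and self-contained.
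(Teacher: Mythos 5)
Your argument is correct, but note that the paper does not prove this statement at all: it is quoted as a classical theorem of Lebesgue (with a citation to the 1909 paper), so there is no proof of record to compare yours against. What you give is the standard self-contained route --- decompose the bad set into pieces $E_{n,M}$ of bounded outer measure where the lower density is below $\alpha=1-1/n$, extract a disjoint Vitali subfamily inside an open $U\supseteq E_{n,M}$ of nearly minimal measure, and play $\lambda^*(E_{n,M})\le\lambda(\sC\cap\bigcup_i Q_i)\le\alpha\,\lambda(U)$ against $\lambda(U)<\lambda^*(E_{n,M})+\varepsilon$ --- and all the steps check out, including the careful use of outer measure to avoid assuming measurability of the exceptional set and the reduction of the intervals to those lying in $U$ (legitimate since $U$ is open). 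The only cosmetic point is that the Vitali covering theorem is usually stated for closed intervals; passing to closures changes neither lengths nor the measures $\lambda(Q\cap\sC)$, so this is harmless. Your closing remark that the upper density is trivially at most $1$, so the full limit exists and equals $1$ a.e., correctly matches the way the paper phrases the theorem via $\ul\varrho$.
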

The  Lebesgue differentiation theorem says that for almost every $z$,  the value of an  integrable function $g$ at $z\in [0,1]$ is approximated by the average of the values around $z$, as one zooms in on~$z$. 
  A point $z$ in the domain of $g$ is called a \emph{weak Lebesgue point} of $g$ if
\begin{equation} \label{eqn:density}  \lim_{\leb Q \to 0}  \frac{1}{\lambda(Q)}\int_{Q} g d\lambda \end{equation} exists, where $Q$ ranges over open intervals  containing $z$; 
 we call~$z$  a \emph{Lebesgue point}  of $g$  if this value equals $g(z)$.  
  \begin{thm}[Lebesgue \cite{Lebesgue:1904}] \label{th:LDT} Suppose $g$ is an integrable function on $[0,1]$. Then almost every $z\in [0,1]$ is a Lebesgue point of $g$.\end{thm}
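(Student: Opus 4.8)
The plan is to prove the stronger statement that for almost every $z \in [0,1]$,
\[ \lim_{\lambda(Q) \to 0} \frac{1}{\lambda(Q)} \int_Q |g - g(z)|\, d\lambda = 0, \]
with $Q$ ranging over open intervals containing $z$; any such $z$ is then a Lebesgue point of $g$. Write $\Lambda g(z)$ for the limit superior of $\bigl| \frac{1}{\lambda(Q)}\int_Q g\, d\lambda - g(z) \bigr|$ as $\lambda(Q) \to 0$ over intervals $Q \ni z$; it then suffices to show that $\Lambda g = 0$ almost everywhere.

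First I would introduce the Hardy--Littlewood maximal operator $M g(z) = \sup_{z \in Q} \frac{1}{\lambda(Q)}\int_Q |g|\, d\lambda$ and establish its weak-type $(1,1)$ inequality, $\lambda\bigl(\{z : Mg(z) > \alpha\}\bigr) \le \frac{C}{\alpha}\,\|g\|_1$ for an absolute constant $C$ and every $\alpha > 0$. This is the heart of the matter, and I expect it to be the main obstacle: it rests on a Vitali-type covering lemma, which from any family of open intervals extracts a pairwise disjoint subfamily whose union captures a fixed fraction of the measure covered by the whole family. Everything downstream of this inequality is soft.

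Granting the maximal inequality, the remaining steps are routine. Fix $\alpha > 0$ and $\varepsilon > 0$, and use the density of $C[0,1]$ in $L^1[0,1]$ to choose a continuous $h$ with $\|g - h\|_1 < \varepsilon$. Uniform continuity of $h$ gives $\Lambda h \equiv 0$, so by subadditivity of the limit superior one has, pointwise, $\Lambda g = \Lambda(g - h) \le M(g - h) + |g - h|$. Therefore
\[ \{\Lambda g > \alpha\} \subseteq \{M(g - h) > \alpha/2\} \cup \{|g - h| > \alpha/2\}, \]
and combining the maximal inequality with Chebyshev's inequality bounds the measure of this set by $\frac{2(C + 1)}{\alpha}\,\varepsilon$. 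Since $\varepsilon$ was arbitrary, $\lambda(\{\Lambda g > \alpha\}) = 0$; taking the union over $\alpha = 1/n$ for $n \in \NN$ yields $\Lambda g = 0$ almost everywhere, which is what we wanted.

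An alternative to the maximal-function route would be to apply the Vitali covering theorem directly to the sublevel sets of $\Lambda g$, obtaining the conclusion by a more hands-on covering argument; but packaging the covering estimate once, as the maximal inequality, keeps the bookkeeping cleanest, and it is exactly this $\liminf$/$\limsup$ control via a covering lemma that the effective versions discussed in Section~\ref{s:analysis} will later have to reproduce in an algorithmic form.
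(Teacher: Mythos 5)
Your argument is correct, but note that the paper itself offers no proof of Theorem~\ref{th:LDT}: it is a classical result quoted from Lebesgue's 1904 monograph in a survey, so there is nothing internal to compare against. Your route is the standard modern one --- Vitali covering lemma $\Rightarrow$ weak-type $(1,1)$ bound for the Hardy--Littlewood maximal operator, then density of $C[0,1]$ in $L^1$ plus Chebyshev to kill the exceptional set --- which is historically Hardy--Littlewood/Riesz rather than Lebesgue's own approach (Lebesgue argued via differentiation of monotone functions and absolute continuity of the indefinite integral). Two small remarks. First, your opening sentence announces the stronger statement with $\int_Q |g-g(z)|\,d\lambda$, but the quantity $\Lambda g$ you actually define and estimate is $\limsup \bigl| \frac{1}{\lambda(Q)}\int_Q g\,d\lambda - g(z)\bigr|$, which is exactly the weaker statement; that is all the paper's definition of Lebesgue point requires, so the proof is complete as written, but either drop the opening claim or note that the stronger version follows by applying your result to the countable family $|g-q|$, $q\in\QQ$, and using that $g$ is finite a.e. Second, you are right that the weak-type inequality is the only non-soft ingredient; since you flag it as the expected obstacle rather than proving it, strictly speaking that covering-lemma step (finite Vitali selection giving a disjoint subfamily capturing a fixed fraction of the covered measure, applied to the open superlevel sets of $Mg$) is the one place where detail is still owed, though it is entirely standard. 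The measurability of $\{\Lambda g > \alpha\}$ need not be addressed separately: your estimate shows its outer measure is $0$, which suffices.
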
 
  Lebesgue   \cite{Lebesgue:1910}  extended this result to higher dimensions, where $Q$ now ranges over open cubes containing~$z$.
  Note that if $g $ is the characteristic function of a measurable set $\sC$, then the expression (\ref{eqn:density}) is precisely the density of $\sC$ at $z$.

 In ergodic theory, one of the best-known ``almost everywhere" theorems is due to G.\ Birkhoff: intuitively, given an   integrable function $g$ on a  probability space with a measure preserving operator $T$,  for almost every point  $z$,  the average of $g$-values at iterates, that is,  $\frac 1 n \sum_{i< n} g(T^i(z))$, converges. If $T$ is ergodic (i.e., all $T$-invariant measurable  sets are null or co-null), the limit equals  $\int g$; in general the limit  is given by the conditional expectation of~$g$ with respect to the $\sigma$-algebra of $T$-invariant sets. 
 
The important insight is this: if the collection of given objects in  an a.e.\ theorem  is  effective in some particular sense, then the theorem  describes a randomness notion  via algorithmic tests. Every   collection of effective objects constitutes a test, and failing it means to be a point of exception for this collection.  Demuth~\cite{Demuth:75} (see below)  made this  connection    in the setting of constructive mathematics. In the usual classical setting,  V'yugin~\cite{Vyugin:98}    showed  that \ML\ randomness of a point $z$ in a computable probability space suffices for the existence of the  limit in Birkhoff's theorem when $T$ is computable and $g$ is $L_1$-computable.  Here $L_1$-computability means in essence that the function can be effectively approximated by step functions, where the distance is measured in the usual  $L_1$-norm. About ten years later, Pathak \cite{Pathak:09}  showed that ML-randomness of $z$   suffices for the existence of the limit in the Lebesgue differentiation theorem when the given function $f$ is $L_1$-computable. This works even when $f$ is defined on $[0,1]^n$ for some $n>1$. Pathak, Rojas and Simpson \cite{Pathak.Rojas.ea:12} showed  that in fact the weaker condition of Schnorr randomness on $z$ suffices. They also showed a converse:   if $z$ is not Schnorr random, then for some $L_1$-computable function~$f$ the limit fails to exist.  Thus, the Lebesgue differentiation theorem, in this effective setting, characterises Schnorr randomness. This converse   was obtained independently by Freer et al. \cite{Freer.Kjos.ea:14}, who also extended the    characterisation of Schnorr randomness to   $L_p$-computable functions,  for any fixed computable real $p \ge 1$. 

In the meantime, 
 Brattka, Miller and Nies proved the above-mentioned effective form of Lebesgue's  theorem~\cite{Lebesgue:1909} that  each nondecreasing function is  a.e.\ differentiable: each nondecreasing \emph{computable} function is differentiable at every computably random real  (\cite{Brattka.Miller.ea:16}, the  work for which was carried out from late 2009).  Later on, Nies~\cite{Nies:14} gave a different, and  somewhat simpler, argument for this result involving the geometric notion of porosity. With some extra complications the latter  argument   carries over to  the   setting  of polynomial time  computability, which was the main thrust of~\cite{Nies:14}. 
 
 Jordan's decomposition theorem says that for every function $f$ of bounded variation there are  nondecreasing functions $g_0, g_1$ such that $f = g_0-g_1$. This is almost trivial in the setting of analysis (take $g_0(x)$ to be the variation of $f$ restricted to  $[0,x]$, and let $g_1 = f-g_0$).  Thus   every bounded variation function $f$  is a.e.\ differentiable. For computable $f$, it turns out that ML-randomness of~$z$  may be  required  to ensure that $f'(z)$ exists; the reason is that the two functions obtained by the Jordan decomposition cannot   always be chosen to be computable.   Demuth~\cite{Demuth:75} had obtained results in the constructive setting which, when re-interpreted classically, show that $z$ is ML-random iff every computable function of bounded variation is differentiable at $z$.  Brattka et al.\  \cite{Brattka.Miller.ea:16} gave   alternative proofs of both implications. For the harder implication, from left to right, they used  their main result on computable nondecreasing functions, together with  the fact that the possible Jordan decompositions of a computable  bounded variation function form a $\PI 1$ class, which therefore has a   member in which  $z$ is random. See the recent survey~\cite{Kucera.Nies:ea:15} for more on Demuth's work   as an early example of an  interplay   between randomness and computability.
 
\section{Lebesgue density and $K$-triviality} 
\label{s:LDT-K}
Can analytic notions   aid in the   study of lowness via randomness?  The answer is ``yes, but only indirectly". Analytic notions help because they  bear on our view of randomness. In this section we will review how the   notion of Lebesgue density helped solving  the ML-covering problem, originally asked by Stephan (2004). This was one of five ``big"  questions in~\cite{Miller.Nies:06}. Every c.e.\ set below a Turing incomplete random is a base for ML-randomness, and hence $K$-trivial \cite{Hirschfeldt.Nies.ea:07}. The covering problem asks whether the converse holds: is every c.e.\ $K$-trivial $A$ below an incomplete random set?  Since every $K$-trivial is Turing below a c.e.\ $K$-trivial \cite{Nies:AM}, we can as well omit the hypothesis that $A$ be c.e.

\subsection*{Computable analysis $\rightsquigarrow$ randomness}
\label{ss:DevAn}
Some effective versions of almost everywhere theorems lack a predefined randomness notion. In the context of Theorem~\ref{th:LDT}, the statement that almost every point is a \emph{weak} Lebesgue point will be called the weak Lebesgue differentiation theorem. We have already discussed the fact  that the weak Lebesgue differentiation theorem  for $L_1$-computable functions characterises  Schnorr randomness. A function  $g$ is lower semicomputable if $\{x\colon g(x) > q \}$ is $\SI 1$ uniformly in a rational $q$, and upper semicomputable if $\{x\colon g(x) <  q \}$ is $\SI 1$ uniformly in a rational $q$.  Which degree of  randomness does a point $z$  need in order  to   ensure  that $z$ is a (weak) Lebesgue point for all  lower (or equivalently, all upper) semicomputable functions? 

Even ML-randomness is insufficient for this. Let $z= \Omega$ denote Chaitin's halting probability, and consider the  $\PPI$-set $\sC = [\Omega, 1]$.  The real $z$ is ML-random, and in particular normal: every block of bits of length $k$ (such as $110011$) occurs with limiting frequency $\tp{-k}$ in its binary expansion.  Suppose $ z \in Q$ where $Q = (i \tp{-n} , (i+1) \tp{-n})$ for some $i< n$. If the binary expansion of $z$ has a long block of 0s from position $n$ on, then $ {\lambda(Q  \cap \sC)}/|Q|$ is large; if $z$ has a long block of 1s from $n$ on then it is small. This implies that $ \lambda(Q)/|Q|$ oscillates between values close to $0$ and close to $1$ as $Q$ ranges over smaller and smaller basic dyadic intervals containing~$z$.  So it cannot be the case that $\ul \varrho(\sC | z)=1$;  in fact the density of $\sC$ at $z$ does not exist. This means that $z$ is not a weak Lebesgue point for the upper semicomputable function  $1_\sC$.  

We say that a ML-random real $z$ 
is \emph{density random} if $\ul \rho(\+ C  \mid z) =1$ for each $\PPI$ set $\+ C$ containing~$z$.
Several equivalent characterisations of density randomness are given in \cite[Thm. 5.8]{Miyabe.Nies.Zhang:15}; for instance, a real  $z$ is density random iff $z$ is a weak Lebesgue point of each lower semicomputable function on $[0,1]$ with finite integral, iff $z$ is  a full Lebesgue point of each such  function.

 \subsection*{Randomness $\rightsquigarrow $ lowness}
 The approach of the Oberwolfach group (2012) was mostly within the classical interplay of randomness and computability. Inspired by the notion of balanced randomness introduced in \cite{Figueira.Miller.ea:09}, they defined a new   notion, now called Oberwolfach (OW) randomness~\cite{Bienvenu.Greenberg.ea:16}. A test  notion equivalent to Oberwolfach tests, and easier to use,  is as  follows. A descending uniformly $\SI 1$ sequence  of sets $\seq{ G_m}\sN m$, together with a left-c.e.\ real $\beta$ with a computable approximation $\beta= \sup_s \beta_s$,  form  a \emph{left-c.e.} test if 
 $\leb G_m = O(\beta-\beta_m)$ for each $m$. Just like in the original definition of Oberwolfach tests, the test components cohere. If there is  an increase $\beta_{s+1} - \beta_s = \gamma >0$,  then all components $G_m$ for $m < s$ are allowed to add up to $\gamma$ in measure, as long as the sequence remains descending. We think of first $G_0$ adding some portion of measure of at most $\gamma$, then $G_1$ adding some portion of that, then $G_2$  a portion of that second portion, and so on up to $G_s$.

 The Oberwolfach group~\cite[Thm.\ 1.1]{Bienvenu.Greenberg.ea:16}  showed that if $Z$ is ML-random, but not OW-random, then $Z$ computes each $K$-trivial. They also proved that OW-randomness implies density randomness.

 \subsection*{Analysis $\rightsquigarrow $ randomness $\rightsquigarrow $ lowness}
Often the notions of density are studied in  the context of Cantor space $\cantor$,  which is easier to work with than  the  unit interval.  In this  context one  defines the density at a a bit sequence $Z$ using  basic dyadic intervals that are given by longer and longer  initial segments of  $Z$. In the context of randomness this turns out to be  a minor change. If $z$ is a ML-random real  and $Z$ its binary expansion, then each $\PPI$ set  $\+ C \sub [0,1]$ has positive density at $z$ iff    each $\PPI$ set $\+ C \sub \cantor$ has positive density at $Z$ iff $Z$ is Turing incomplete,  by a result in Bienvenu et al.~\cite{Bienvenu.Hoelzl.ea:12a}. Dyadic and full density~1 also coincide for ML-random reals by a  result of Khan and Miller~\cite[Thm. 3.12]{Khan:15}.

 Day and Miller  \cite{Day.Miller:15}  used a forcing partial order specially adapted to the setting of intermediate density to prove that there is a ML-random $Z$ such that $\ul \rho (\sC \mid Z) >0$ for each $\PPI$ class $\sC \sub \cantor$, and at the same time  there is a $\PPI$ class $\+ D \ni Z$ such that  
 $\ul \rho (\+ D \mid Z) < 1$.  Hence $Z$ is incomplete ML-random and not Oberwolfach random. By the aforementioned result of the Oberwolfach group~\cite[Thm.\ 1.1]{Bienvenu.Greenberg.ea:16} this means that the single oracle $Z$ computes each $K$-trivial, thereby giving a strong affirmative answer to the covering problem.
 
  Day and Miller also refined their argument in order  to make $Z$ a $\DII$ set. 
 No direct construction is known to build a $\DII$ incomplete ML-random that is not Oberwolfach random. In fact, it is open whether Oberwolfach and density randomness coincide (see Question~\ref{qu:OW} below).

\section{Cost functions and subclasses of the $K$-trivials} 
\label{s:costfunctions}

In this and the following section, we survey ways  to gauge the complexity of Turing degrees directly with    methods inspired by analysis. The first method  only  applies to $K$-trivials:   use the analytical tool of a cost function to study proper subideals of the ideal of $K$-trivial Turing degrees. This yields  a dense hierarchy of ideals parameterised by rationals in $(0,1)$.

The second method  assigns real number parameters  $\Gamma(\mathbf a) , \Delta(\mathbf a) \in [0,1]$ to Turing degrees $\mathbf a$ in order to measure their complexity.  These  assignments can be interpreted in the context of   Hausdorff distance in pseudometric spaces. In a sense,  this second attempt turns out to be too coarse  because in both variants, only the  values $0$ and $1/2$ are possible for  non-computable Turing degrees (after a modification of the definition which we also present, the classes of sets with value $0$ have    subclasses that are potentially proper).  However, this also shows that these few  classes of complexity obtained must be natural and  important. Although they richly interact with previously studied classes, they haven't as yet been fully  characterised by other means. 

Both approaches are     connected to randomness through the investigations of the concepts, rather than   directly  through the definitions. We will explain these connections as we go along.

 \subsection*{Cost functions}
 
 Somewhat extending \cite[Section 5.3]{Nies:book}, we say that a \emph{cost function} is a computable function
 \[ \cost\colon  \NN \times \NN \ria \{x \in \RR \,:\,  x \ge 0\}.
 \]
For  background on cost functions see \cite{Nies:CalculusOfCostFunctions}.
 We say that~$\cost$ is \emph{monotonic} if $\cost(x+1,s) \le \cost(x,s) \le \cost(x,s+1)$ for each~$x$ and~$s$; we also assume that $\cost(x,s)=0$ for all $x\ge s$.  We view $\cost(x,s)$ as the cost of changing at stage~$s$ a guess $A_{s-1}(x)$ at the value $A(x)$,  for some $\Delta^0_2$ set~$A$. Monotonicity means that the cost of a change increases with time, and that  changing the guess at a smaller number is  more costly. 

If $\cost$ is a cost function,  we let $\limcost(x) = \sup_s \cost(x,s)$. To be useful, a monotonic cost function $\cost$ needs to satisfy the \emph{limit condition}:  $\limcost(x)$ is finite for all~$x$ and $\lim_x \limcost(x)= 0$.  
  \begin{definition}[\cite{Nies:book}] \label{def:obeying_a_cost_function}
Let $\seq{A_s}$ be a computable approximation of a $\Delta^0_2$ set~$A$, and let~$\cost$ be a cost function. The \emph{total $\cost$-cost} of the approximation is 
\[   \cost  (\seq{A_s}\sN s) = \sum_{s\in\omega} \left\{  \cost(x,s) \,: \,    x  \text{ is least such that } A_{s-1}(x) \ne A_{s}(x) \right\}.\]
We say that a $\Delta^0_2$ set~$A$ \emph{obeys}~$\cost$ if the total $\cost$-cost of \emph{some} computable approximation of~$A$ is finite. We write $A \models \cost$.
\end{definition}
\begin{figure}[htb] \bc \scalebox{.4}{\includegraphics{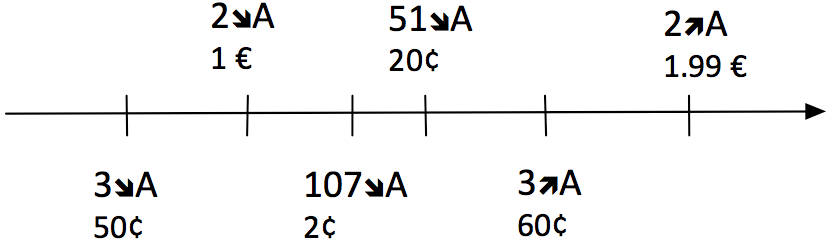}} \ec
\caption{Timeline illustrating the cost (in Euros) generated by an  approximation of  a $\DII$ set $A$ for a particular cost function.}\end{figure}

This definition, first given in \cite{Nies:book}, was conceived as an abstraction of  the  construction of  a c.e.\ noncomputable $K$-trivial set in Downey et al.~\cite{Downey.Hirschfeldt.ea:03}. Perhaps the  intuition  stems from analysis. For instance, the length of  a curve, i.e.\ a $\+ C^1$ function  $f \colon \, [0,1] \to \RR^n$, is given by $\int_0^1 || f'(t)|| dt$. The ``cost" of the change $f'(t)$ at stage $t$ is the velocity $|| f'(t)||$, and to have a finite total cost means that the curve is rectifiable. 

The paper~\cite{Nies:CalculusOfCostFunctions} also treats   non-monotonic cost functions, where   we define  $\limcost(x) = \liminf_s \cost(x,s)$ and otherwise retain the definition of the limit condition $\lim_x \limcost(x)= 0$.  Intuitively,   enumeration of $x$ into $A$   can only  take place    at a   stage  when the cost drops. This is reminiscent of  $\ES''$-constructions, for instance  building  a Turing  minimal pair of c.e.\ sets.  It would be interesting to define  a pair of   cost functions $\cost, \dost$ with the limit condition such that $A \models \cost$ and $ B \models \dost$ for c.e.\ sets $A,B$  imply that $A, B$ form a minimal pair. 

\subsection*{Applications of cost functions} Let  $\beta$ be a  left-c.e.\ real given  as $\beta = \sup_s \beta_s$ for a computable sequence $\seq{\beta_s}\sN s$ of rationals. We let $\cost_{ \beta }(x,s)= \beta_s - \beta_x$. Note that  $\cost_{ \beta }$ is a monotonic cost function with the limit condition. Modifying a result from \cite{Nies:AM}, in \cite{Nies:CalculusOfCostFunctions} it is shown that a $\DII$ set $A$ is $K$-trivial iff $A \models \cost_  \Omega$. Thus $\cost_  \Omega$ is a  cost function describing  $K$-triviality. This   raises the question whether obedience to cost functions stronger than $\cost_  \Omega$    can describe interesting subideals  of the ideal of $K$-trivial Turing degrees   (being a stronger  cost function means being harder to obey, i.e.\ more expensive).  

By the ``halves" of a set $Z$ we mean the sets $Z_0 = Z \cap \{2n \colon n \in \NN\}$ and $Z_1 = Z \cap \{2n+1 \colon n \in \NN\}$. If $Z$ is ML-random and $A \leT Z_0, Z_1$ then $A$ is a base for ML-randomness, and hence $K$-trivial. So we obtain a subclass $\+ B_{1/2}$ of the $K$-trivial sets, namely the  sets below both halves of a ML-random.  Bienvenu et al. \cite{Bienvenu.Greenberg.ea:16} had already proved that this subclass is proper.
Let $\cost_{\Omega, 1/2}(x,s) = \sqrt{\Omega_s - \Omega_x}$. In recent work, Greenberg, Miller and Nies obtained the following characterisation of $\+ B_{1/2}$.
\begin{thm}[\cite{Greenberg.Miller.ea:nd2}, Thm.\ 1.1. and its proof] The following are equivalent for a set $A$. 

\bi \item[(a)] $A$ is Turing below both halves of some ML-random
\item[(b)] $A$ is Turing below both halves of $\Omega$
\item[(c)] $A$ is a  $\DII$ set   that obeys $ \cost_{\Omega, 1/2}(x,s)$. \ei \end{thm}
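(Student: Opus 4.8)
The plan is to establish the cycle of implications (a) $\Rightarrow$ (b) $\Rightarrow$ (c) $\Rightarrow$ (a), with the bulk of the work in (b) $\Rightarrow$ (c) and especially (c) $\Rightarrow$ (a). First, (b) $\Rightarrow$ (a) is trivial since $\Omega$ is itself ML-random. For (a) $\Rightarrow$ (b): suppose $A \leT Z_0, Z_1$ for a ML-random $Z$; as noted in the excerpt, $A$ is then a base for ML-randomness and hence $K$-trivial. Since every $K$-trivial is Turing below a c.e.\ $K$-trivial, and $\Omega$ is ML-random relative to which one can run a suitable coding argument, the real work is to "transfer" the computation from the halves of an arbitrary ML-random down to the halves of $\Omega$. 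The natural route is via van Lambalgen's theorem: $Z = Z_0 \oplus Z_1$ is ML-random iff $Z_0$ is ML-random and $Z_1$ is ML-random relative to $Z_0$ (and symmetrically). One uses this together with the fact that $\Omega$ is ML-random relative to every $\DII$ oracle in a suitable sense to push $A$ below both halves of $\Omega$ — this is the kind of argument that already appears in the proof that $\+ B_{1/2}$ is proper in \cite{Bienvenu.Greenberg.ea:16}, and one would cite/adapt it. Probably the cleanest formulation: if $A \leT Z_0$ and $A \leT Z_1$ with $Z_0 \oplus Z_1$ ML-random, then by relativized base-for-randomness arguments $A$ is low for the relevant random, which lets one replace $Z_0, Z_1$ by the halves $\Omega_0, \Omega_1$ of $\Omega$ while preserving $A \leT \Omega_0, \Omega_1$.

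The implication (b) $\Rightarrow$ (c) is the analytic heart connecting the two halves of $\Omega$ with the cost function $\cost_{\Omega,1/2}(x,s) = \sqrt{\Omega_s - \Omega_x}$. Suppose $A \leT \Omega_0$ and $A \leT \Omega_1$ via reductions $\Phi, \Psi$. One builds a computable approximation $\seq{A_s}$ of $A$: at stage $s$, use the stage-$s$ approximations $\Omega_{0,s}, \Omega_{1,s}$ to compute a candidate value, changing the guess at $x$ only when forced by a change in one of the two halves of $\Omega$ below the use. The key estimate is a Cauchy–Schwarz / "square root trick": if between stages $x$ and $s$ the left-c.e.\ real $\Omega$ has increased by a total of $\Omega_s - \Omega_x$, then that increase is split across the even positions (affecting $\Omega_0$) and odd positions (affecting $\Omega_1$); the number of stages at which a genuine change propagates to $A(x)$ is controlled because such a change requires both halves to have "caught up" past the use, and the product of the two partial increases is bounded by a quarter of the square of the total, so summing $\sqrt{\Omega_s - \Omega_x}$ over the change stages telescopes against $\sqrt{\Omega - \Omega_x} \to 0$. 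This gives finite total $\cost_{\Omega,1/2}$-cost, i.e.\ $A \models \cost_{\Omega,1/2}$, and $A$ is $\DII$ since it is $K$-trivial. I expect this is where the real combinatorial content lies, and it is exactly the kind of argument the phrase "Thm.\ 1.1 and its proof" in the citation is pointing at.

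Finally, (c) $\Rightarrow$ (a): given a $\DII$ set $A$ with a computable approximation of finite $\cost_{\Omega,1/2}$-cost, one constructs a single ML-random $Z$ whose two halves $Z_0, Z_1$ each compute $A$. This is a cost-function "golden run" / Kučera-style coding construction: one reserves, for each $x$, a small region of $Z_0$ and of $Z_1$ in which to code the current guess $A_s(x)$, sized so that the total measure sacrificed from a universal ML-test bound is $O(\sum_x \sqrt{\limcost(x)})$-ish — the square root is precisely what is affordable when one must code redundantly into two independent halves, each "seeing" only a factor related to $\sqrt{\cost_\Omega}$. One keeps $Z = Z_0 \oplus Z_1$ out of the appropriate ML-test by a measure-theoretic bookkeeping argument, using the limit condition to ensure the tail contributions vanish. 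The main obstacle overall is getting the measure bookkeeping in this last step to match the square-root cost bound exactly — i.e.\ showing that the $\sqrt{\,\cdot\,}$ in $\cost_{\Omega,1/2}$ is not just sufficient but is the natural threshold forced by having to feed \emph{both} halves — and dually, making the Cauchy–Schwarz estimate in (b) $\Rightarrow$ (c) tight enough to land inside that same bound. One would organize the write-up so that these two square-root estimates are visibly dual, which also explains why the characterization is clean.
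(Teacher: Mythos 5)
There is a genuine gap, and it sits at the load-bearing joint of your cycle. Your step (a)$\Rightarrow$(b) uses only that $A$ is a base for ML-randomness, hence $K$-trivial, hence ``low for the relevant random''; but every $K$-trivial has those properties, while the class $\+ B_{1/2}$ of sets below both halves of a random is a \emph{proper} subclass of the $K$-trivials (as noted just before the theorem, citing \cite{Bienvenu.Greenberg.ea:16}). So no argument of that shape can transfer $A$ from the halves of an arbitrary random $Z$ to the halves of $\Omega$; van Lambalgen gives mutual randomness of $Z_0,Z_1$ but supplies no replacement mechanism. In the actual proof (b) is not derived from (a) at all: the implications established are (a)$\Rightarrow$(c) and (c)$\Rightarrow$(b), with (b)$\Rightarrow$(a) trivial, so the passage to $\Omega$ comes for free from the cost-function characterisation rather than from a lowness/transfer argument.

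The other two steps also miss the intended mechanisms. For (b)$\Rightarrow$(c), your ``Cauchy--Schwarz telescoping'' on the canonical approximation does not go through: the string approximations to $\Omega_0,\Omega_1$ are not monotone (the halves are not left-c.e.\ reals), a single change of a half below the current use forces no quantitative increase of $\Omega$, the uses grow without bound, and you never control the sum of costs over all $x$. The genuinely hard direction is (a)$\Rightarrow$(c), and it is proved by a test/measure argument exploiting the mutual randomness of the two halves in the style of \cite{Bienvenu.Greenberg.ea:16}: the relevant class of oracle pairs has measure a \emph{product} of the measures of two independent classes, and the square root in $\cost_{\Omega,1/2}$ enters as the resulting geometric mean, not via use counting. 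Finally, for (c)$\Rightarrow$(a) your K\v{u}cera-style coding of $A$ into both halves of a freshly built random is exactly the step whose ``measure bookkeeping'' you concede is missing, and it is unnecessary: one shows that each half $\Omega_i$ is captured by a $\cost_{\Omega,1/2}$-bounded test (a change of $\Omega_i\restr k$ forces a change of $\Omega\restr 2k$, which can occur at most roughly $2^{2k}(\Omega-\Omega_x)$ times after stage $x$, so choosing $2^{-k}\approx\sqrt{\Omega-\Omega_x}$ the enumerated cylinders have total measure $O(\sqrt{\Omega-\Omega_x})$), and then Proposition~\ref{prop:basic fact} immediately yields $A\leT \Omega_0$ and $A\leT\Omega_1$, i.e.\ the stronger implication (c)$\Rightarrow$(b). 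As written, your cycle neither closes nor matches the route the cited proof takes.
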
 
 
They generalise the result towards a characterisation of   classes $\+ B_{k/n}$, where $0< k < n$. The  class $\+ B_{k/n}$ consists of the $\DII $ sets $A$ that are Turing below the effective join of   any set of $k$ among the $n$-columns of some ML-random set $Z$; as before,  $Z$ can be taken to be  $\Omega$ without changing the class. The characterising cost function is $\cost_{\Omega, q}(x,s) = (\Omega_s - \Omega_x)^q$, where $q = k/n$. In particular, the class does not depend on the   representation of $q$  as a fraction of integers. By this cost function characterisation and the hierarchy theorem~\cite[Thm.\ 3.4]{Nies:CalculusOfCostFunctions},  $p<q $ implies that $\+ B_p$ is a proper subclass of $\+ B_q$.  

Following Hirschfeldt et al.~\cite{Hirschfeldt.Jockusch.ea:15}  we say  that a set  $A$ is \emph{robustly computable} from  a set $Z$ if $A \leT Y$ for each set $Y$ such that the symmetric difference of $Y$ and $Z$ has upper density $0$. In~\cite{Greenberg.Miller.ea:nd2} it is  shown that the union of all the $\+ B_q$, $q<1$ rational, coincides with the sets that are robustly computable from some ML-random set $Z$. 

\subsection*{Calibrating randomness notions via cost functions}

Bienvenu et al. \cite{Bienvenu.Greenberg.ea:16}  used cost functions to calibrate certain randomness notions.  
Let $\cost$ be  a monotonic cost function with the limit condition.  A descending  sequence $\seq{V_n}$ of uniformly c.e.\ open sets is a \emph{$\cost$-bounded test} if $\leb(V_n)  = O(\limcost(n))$ for all~$n$. We think of each $V_n$ as an approximation for $Y\in \bigcap_k V_k$. Being in  $\bigcap_n V_n$ can be viewed as a new sense of obeying $\cost$ that works  for  ML-random sets.  Unlike the  first  notion of obedience, here only the limit function $\ul \cost(x)$ is taken into account in the definition.

   Solovay completeness is a certain universal property of $\Omega$ among the left-c.e.\ reals;  see e.g.\ \cite{Downey.Hirschfeldt.ea:02}). Using this notion, one can show that the left-c.e.\ bounded  tests  defined above are essentially   the $\cost_\seq \Omega$-bounded tests. 

We now survey some related, as yet unpublished work of Greenberg, Miller, Nies and Turetsky dating from early 2015.
Hirschfeldt and Miller  in unpublished 2006 work had proven that for any $\Sigma_3$ null class $\+ C$  of  ML-random sets,  there is a c.e.\ incomputable set Turing below all the members of $\+ C$. Their argument can be recast in the language of cost functions in order to show the following (here and below $\cost $ is some monotonic cost function with the limit condition).
\begin{prop} \label{prop:basic fact}  Suppose that   $A \models \cost$ and $Y$ is in the $\SI 3$ null class of   ML-randoms captured by a $\cost$-bounded test. Then $A \leT Y$.  \end{prop}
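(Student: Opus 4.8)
The plan is to combine the cost-function machinery with a standard "golden run"–style construction. Suppose $A \models \cost$ via a computable approximation $\seq{A_s}$ with total $\cost$-cost finite, and suppose $\seq{V_n}$ is a $\cost$-bounded test, so $\seq{V_n}$ is a descending sequence of uniformly c.e.\ open sets with $\leb(V_n) = O(\limcost(n))$, and $Y \in \bigcap_n V_n$. We want a Turing functional $\Phi$ with $\Phi^Y = A$. Since $\lim_x \limcost(x) = 0$, we may pass to a subsequence and assume $\leb(V_n) \le 2^{-n}$ while still having $\leb(V_n) = O(\limcost(n))$; the point is that membership of $Y$ in $\bigcap_n V_n$ gives us, for each $n$, a "small neighbourhood'' of $Y$ from which we can read off an initial segment of $A$, and the smallness is controlled by $\limcost(n)$, which is exactly what the finite $\cost$-cost of $\seq{A_s}$ can pay for.

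First I would set up the approximation of $\Phi$. At stage $s$ we have $A_s$, approximations $V_{n,s}$ to the open sets, and a current "use'' assignment. For each $n$, when a new clopen piece enters $V_{n,s}$, we attempt to define $\Phi^\sigma \restr \ell_n = A_s \restr \ell_n$ for every $\sigma$ in that piece, where $\ell_n$ is a length chosen (growing with $n$) so that getting $\Phi^Y$ right up to $\ell_n$ only requires $Y \in V_n$. Whenever $A$ changes below $\ell_n$ at some stage $s$ — say $x < \ell_n$ is least with $A_{s-1}(x) \ne A_s(x)$ — we must correct $\Phi$ on the current piece of $V_n$ containing (our guess for) $Y$, which forces us to throw that piece into $V_{n}$ "again'' conceptually, i.e.\ we charge measure $\leb$ of the relevant clopen set. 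The key accounting: the measure we are allowed to pour into $V_n$ over the whole construction is $O(\limcost(n))$, and we only need to pour in new measure when $A$ changes below $\ell_n$, an event which the finite $\cost$-cost of $\seq{A_s}$ controls — each such change at stage $s$ contributes at least roughly $\cost(x,s)$, and by monotonicity and the choice of $\ell_n$ relative to $n$, $\cost(x,s) \ge \cost(n,s)$ which is comparable to $\limcost(n)$ for the changes that matter. Summing over all $n$ and all changes, the total measure demanded is bounded by a constant times the total $\cost$-cost, which is finite; so we can meet the measure bound $\leb(V_n) = O(\limcost(n))$ for a test we build alongside. Since $\seq{V_n}$ is assumed given, the cleaner route is: we build an auxiliary $\cost$-bounded test $\seq{W_n}$ tracking the corrections, note $Y \in \bigcap W_n$ is \emph{not} what we want — rather we argue that $Y$, being in the given $\bigcap V_n$, survives all but finitely many corrections, so from some point on the $\Phi$-computations from $Y$ are stable and correct.

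The main obstacle is the coherence/bookkeeping between the given test $\seq{V_n}$ and the corrections forced by changes in $A$: we do not control $\seq{V_n}$, so we cannot simply "add'' clopen sets to it. The fix is the usual one in this area — instead of modifying $V_n$, we use the $n$-th level of $\seq{V_n}$ as a \emph{budget}: at the level where $\leb(V_n)$ is already down to $O(\limcost(n))$, any clopen neighbourhood of $Y$ inside $V_n$ has measure at most that, and each time $A$ changes below $\ell_n$ we move to a \emph{smaller} neighbourhood of $Y$ inside $V_{n'}$ for a larger $n'$ (legitimate since $Y \in V_{n'}$ too and the $V$'s are descending). The number of times we can be forced to shrink is bounded because the total $\cost$-cost is finite and each shrink corresponds to a genuine $\cost$-charge; hence for each fixed output length there is a last correction, $\Phi^Y$ stabilises, and $\Phi^Y = A$. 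I would expect the verification that the charges are summable — i.e.\ relating the measure lost at level $n$ to $\cost(x,s)$ for the triggering change, uniformly — to be the one genuinely delicate computation, but it is exactly the Hirschfeldt–Miller argument recast, so it goes through.
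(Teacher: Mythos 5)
Your overall plan---charge measure of the test against the changes of $A$, and argue that only finitely many corrections affect $Y$---has the right shape (it is the Hirschfeldt--Miller argument the paper alludes to; the survey itself gives no proof), but two of your key steps fail as written. First, you never use that $Y$ is ML-random, and your substitute for it is incorrect: you claim the number of forced corrections is bounded ``because the total $\cost$-cost is finite and each shrink corresponds to a genuine $\cost$-charge'', and later that $Y$ ``survives all but finitely many corrections'' merely by being in $\bigcap_n V_n$. Finiteness of the total cost does not bound the number of changes, since the individual charges $\cost(x,s)$ tend to $0$; there may well be infinitely many corrections. What the finite total cost actually buys is that the charged sets form a Solovay test: whenever $A_{s-1}(x)\ne A_s(x)$ with $x$ least, enumerate the current approximation $V_{x,s}$ as a component; the total measure is $O$ of the total cost of $\seq{A_s}$, hence finite. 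It is the ML-randomness of $Y$ (passing this Solovay test) that guarantees $Y$ lies in only finitely many components, and that is exactly what makes the procedure ``on input $x$, find the least $s$ with $Y \in V_{x,s}$ and output $A_s(x)$'' correct for almost all $x$: a later change of $A$ below $x$ would enumerate a component containing $Y$. Membership in $\bigcap_n V_n$ alone gives nothing here, which is why the hypothesis restricts to ML-random members of the test.

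Second, your accounting step ``$\cost(x,s) \ge \cost(n,s)$ which is comparable to $\limcost(n)$ for the changes that matter'' is false in general: at the stage when the charge is incurred, $\cost(n,s)$ may be far below $\limcost(n)=\sup_t \cost(n,t)$, while the measure already present in $V_n$ is only known to be $O(\limcost(n))$ in the limit. The needed repair is to slow the given test down: since $\leb (V_n) \le c\,\limcost(n)$ and $\cost(n,s)$ increases to $\limcost(n)$, one can delay the enumeration into $V_n$ so that $\leb(V_{n,s}) \le c'\,\cost(n,s)$ at every stage, without changing the limit sets (so $Y$ is still captured). With this normalisation each charged component $V_{x,s}$ has measure $O(\cost(x,s))$, the Solovay test has finite measure, and the argument closes, with the finitely many exceptional $x$ hard-coded into the reduction. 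Finally, drop the framing via a functional $\Phi$ that gets ``corrected'': a Turing functional cannot be redefined on the same oracle strings; the computation procedure just described is what the recast argument actually is.
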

We consider sets $A$ such that the converse   implication  holds as well.
\begin{definition} Let  $A$ be a $\DII$ set. We say that $A$ is \emph{smart  for $\cost$} if $A \models \cost$,  and $A \leT Y$ for each ML-random set $Y$
that   is captured by some  $\cost$-bounded test.  \end{definition}
Informally, $A$ is as complex as possible for obeying $\cost$, in the sense that  the only random sets $Y$ Turing above $A$ are the ones that are above $A$  because $A$ obeys the cost function showing that $A \leT Y$  via Proposition~\ref{prop:basic fact}.     

For instance, $A$ is smart for $\cost_  \Omega$ iff no   ML-random set $Y \ge_T A$  is   Oberwolfach random. 
Bienvenu et al. \cite{Bienvenu.Greenberg.ea:16}  proved that some $K$-trivial set $A$ is smart for $\cost_  \Omega$. This means that $A$ is the hardest to ``cover" by a ML-random: any ML-random computing $A$ will compute all  the $K$-trivials by virtue of not being Oberwolfach random.

 In the new work of Greenberg et al., this  result is generalised to arbitrary monotonic cost functions with the limit condition that imply $\cost_  \Omega$. %
\begin{theorem}[Greenberg et al., 2015] Let $\cost$ be  a monotonic cost function with the limit condition and suppose that    only $K$-trivial sets can obey~$\cost$. Then some  c.e.\ set $A$  is smart  for~$\cost$.  \end{theorem}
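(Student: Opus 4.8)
\noindent\emph{Proof idea.} The content we must extract from the construction is that the c.e.\ set $A$ obeys $\cost$ and is as hard as possible for this: every ML-random $Y$ with $A\leT Y$ is captured by some $\cost$-bounded test (for $\cost=\cost_\Omega$ this is precisely the statement that no ML-random above $A$ is Oberwolfach random). The reverse inclusion, that each ML-random captured by a $\cost$-bounded test already lies above $A$, is given to us for free by Proposition~\ref{prop:basic fact}; so all the work is in the displayed direction. The plan is to enumerate $A$ by a priority construction that simultaneously builds, for every Turing functional $\Phi_e$, a $\cost$-bounded test $\seq{V^e_n}\sN n$ with the property that $\bigcap_n V^e_n$ contains every ML-random $Y$ satisfying $\Phi^Y_e=A$. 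Since $A\leT Y$ means $\Phi^Y_e=A$ for some $e$, meeting all these requirements yields smartness.

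The analytic skeleton is this. For fixed $e$ set $R^e=\{Z\in\cantor:\Phi^Z_e=A\}$. The construction will keep $A$ noncomputable, and the classical fact that no functional computes a noncomputable set off a null set of oracles then gives $\leb(R^e)=0$; hence the length-$m$ approximations $\{Z:\Phi^Z_e\uh m=A\uh m\}$ shrink to a null set as $m\to\infty$, while any honest $Y$ with $\Phi^Y_e=A$ stays inside them at every length. The only real issue is therefore the \emph{rate} of this shrinkage: we must arrange that the approximation used as the $n$-th test component has measure at most $c_e\cdot\limcost(n)$, with $\sum_e c_e<\infty$ so that the separate tests draw on one finite budget. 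The engine producing the shrinkage is the enumeration of $A$, bookkept by a cost/measure ledger. Putting $x$ into $A$ at stage $s$ costs $\cost(x,s)$ and expels from the correct region every cylinder $[\sigma]$ currently inside it with $\Phi^\sigma_e(x)\DA 0$; this expelled mass is decanted down the test exactly as in the coherent left-c.e.\ tests described above, so component $n$ only ever accumulates the tail of the budget, namely $O(\limcost(n))$. Since $\limcost(x)\to 0$, a kill can always be made cheap by taking a large fresh $x$ below the current length, and allotting functional $e$ the fraction $2^{-e}$ of the budget keeps each test $\cost$-bounded and the total $\cost$-cost of the approximation finite, securing $A\models\cost$.

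The hard part will be forcing the correct region of a \emph{given} $\Phi_e$ down to rate $\limcost(n)$ within its small allotment, for it is $\Phi_e$, and not we, that decides how $R^e$ splits when a number enters $A$: an adversarial $\Phi_e$ can try to keep almost all of the correct mass on the side that survives each kill. The resolution is a counting argument that matches every unit of correct mass removed against a unit of $\cost$-cost paid, and here is exactly where the hypothesis that \emph{only $K$-trivial sets obey $\cost$} is used. Via the characterisation that $A\models\cost_\Omega$ holds iff $A$ is $K$-trivial, this hypothesis says that obedience to $\cost$ entails obedience to $\cost_\Omega$, i.e.\ $\cost$ lies above the borderline cost function $\cost_\Omega$ whose increments $\Omega_s-\Omega_x$ are calibrated against measure. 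This comparison is what guarantees a favourable exchange rate---that the cost of an $A$-change dominates the oracle-measure the change is required to remove---so that driving $R^e$ all the way to the null set can be paid for out of a finite budget.

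Finally, coordinating the functionals is a routine finite-injury matter, since shrinking one correct region never enlarges another and the only shared resource is the split, finite cost budget. One checks that $A$ is forced to be noncomputable---otherwise the requirement for a functional that outputs $A$ independently of its oracle, whose correct region is all of $\cantor$, could never be met---this being arranged by a standard diagonalisation inside the budget; that each $\seq{V^e_n}\sN n$ meets its measure bound and so is a genuine $\cost$-bounded test; and that the single approximation of $A$ has finite total $\cost$-cost. Together these give a c.e.\ set $A$ obeying $\cost$ such that every ML-random set computing $A$ is captured by a $\cost$-bounded test, i.e.\ a set smart for $\cost$.
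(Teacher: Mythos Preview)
The paper does not itself prove this theorem; it refers the argument to \cite[Part~2]{LogicBlog:16} and adds only two sentences of commentary. But those sentences are pointed, and they clash with the structure of your sketch: the paper emphasises that the construction ``certainly has no injury, because there are no requirements'', and that $A$'s incomputability is a \emph{consequence} (whence a requirement-free solution to Post's problem). Your proposal, by contrast, is organised around a requirement for each Turing functional $\Phi_e$, each building its own $\cost$-bounded test $\seq{V^e_n}$, coordinated by what you call ``routine finite-injury'', with a separate diagonalisation thrown in to force $A$ noncomputable. So whatever route the referenced proof takes, it is not yours: the paper is advertising a single direct construction with global bookkeeping rather than a per-functional decomposition, and it is presenting that as ``simpler than the original'' precisely because such machinery is dispensed with.

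Your outline is plausible---in particular, the identification of the key exchange-rate point (obedience to $\cost$ forces $K$-triviality, hence $\cost$ bounds $\cost_\Omega$, so cost paid can be made to dominate oracle-measure processed) is exactly the right idea---but two passages would need real work before it counted as a proof. First, your components $V^e_n$ must be uniformly $\Sigma^0_1$, yet the ``correct region'' $\{Z:\Phi^Z_e\uh m=A\uh m\}$ depends on the moving target $A$; you need to say concretely how mass is enumerated into $V^e_n$ so that the result is genuinely c.e., descending in $n$, and still covers the final $R^e$. Second, enumerating $x$ into $A$ does not simply shrink the correct region for $\Phi_e$: oracles with $\Phi^Z_e(x)\DA 1$ now \emph{enter} it, so the ``decanting'' must control the total mass ever placed in each $V^e_n$, not the mass expelled from $R^e$. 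These are exactly the places where the cost--measure exchange does its work, and your sketch gestures at them without carrying them through.
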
 
 The proof of the more general result, available in \cite[Part 2]{LogicBlog:16}, is simpler than the original one. Since $A$ cannot be computable, the proof also yields a solution to Post's problem. This solution  certainly has no injury, because there are no requirements.

\section{The $\Gamma$ and the $\Delta$ parameter of a Turing degree}
 \label{s:gamma_delta}
 We proceed to our second method of gauging  the complexity of Turing degrees   with    methods inspired by analysis.  We will be able to give the intuitive notion of being ``close to computable" a metric interpretation.
 
For $Z\sub \NN$ the lower density is defined to be
$$\underline \eta (Z)  = \liminf_n \frac{|Z \cap [0, n)|}{n}.$$
(In the literature the symbol $\ul \rho$ is used. However, the same symbol denotes the Lebesgue density in the sense of Definition~\ref{def:LD}, so we prefer $\ul \eta$ here.) Hirschfeldt et al.\ \cite{Hirschfeldt:2015} defined the $\gamma$ parameter of a set $Y$:  $$\gamma(Y)=\sup \{ \underline \eta(Y\leftrightarrow S) \colon \, S \, \text{is computable}\}.$$ 
The $\Gamma$ operator was introduced by  Andrews, Cai, Diamondstone, Jockusch and Lempp \cite{Andrews.etal:2013}:
$$\Gamma(A) = \inf \{ \gamma(Y) \colon  \, Y \le_T A \}.$$
It is easy to see that this  only depends on the Turing degree of $A$: one can code $A$ back into $Y$ on a sparse computable set of positions (e.g.\ the powers of 2), without affecting $\gamma(Y)$.  


We now provide dual concepts.
Let   $$\delta(Y)=\inf \{ \underline \eta(Y\leftrightarrow S) \colon \, S \text{ computable}\},$$
$$\Delta(A) = \sup \{ \delta(Y) \colon  \, Y \le_T A \}.$$

Intuitively,   $\Gamma(A)$ measures  how well computable sets can approximate    the sets that $A$ computes in the worst case (we take the infimum over all $Y \le_T A$). In contrast, $\Delta(A)$ measures how well the sets   that $A$ computes can approximate   the computable sets in the  best case (we take the supremum over all $Y \le_T A$). Note that $A \leT B$ implies $\Gamma(A) \ge \Gamma(B)$ and $\Delta(A) \le \Delta(B)$.

 It was shown in 
 \cite{Andrews.etal:2013} that $\Gamma(A) >1/2 \lra \Gamma(A) = 1 \lra A$ is computable. 
Clearly the maximum value of $\Delta(A)$ is $1/2$. It is attained, for example, when $A$ computes a Schnorr random set $Y$, because in that case  $\underline \eta(Y\leftrightarrow S)=1/2$ for each computable $S$. Merkle, Nies and Stephan have shown that $\Delta(A)=0$ for every $2$-generic $A$. 

\subsection*{Viewing $1-\Gamma(A)$ as  a Hausdorff pseudodistance}
For $Z\sub \NN$ the upper density is defined by $$\overline \eta (Z)  = \limsup_n \frac{|Z \cap [0, n)|}{n}.$$ For  $X,Y \in \cantor$ let $d(X,Y) = \ol \eta (X \triangle Y)$ be the upper density of the symmetric difference of $X$ and $Y$; this is clearly a pseudodistance  on Cantor space $\cantor$ (that is, two objects  may have  distance $0$ without being equal). For subsets $\+ U,\+ W$  of a pseudometric space $(M,d)$ recall the Hausdorff pseudodistance $$d_H(\+ U, \+ W) = \max  ( \sup_{u \in \+ U} d(u, \+ W),  \sup_{w \in \+ W} d(w, \+ U))$$ where $d(x, \+ R) = \inf_{r\in \+ R} d(x,r)$ for any $x \in M, \+ R \sub M$.  Clearly,  if $\+ U \supseteq \+ W$ then the second supremum is $0$, so that  we only need the first. 
\begin{figure}[htb]
\bc \scalebox{.3}{\includegraphics{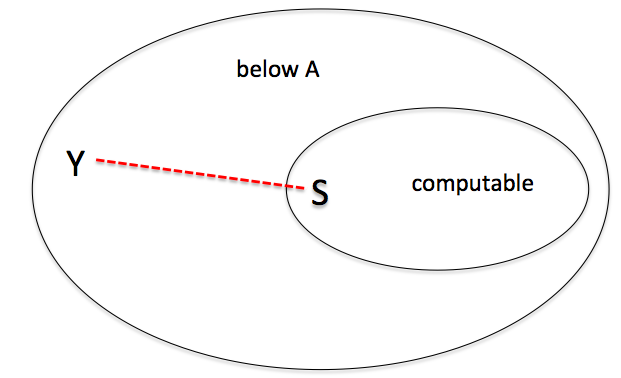}} \ec
\caption{Hausdorff pseudodistance $\sup_{Y\in \+ A}\inf_{S \in \+ R } d(Y,S)$.}
\end{figure}
The following fact, which is now clear from the definitions, states that $1-\Gamma(A)$ gauges how close  $A$ is to being computable, in the sense that it is  the Hausdorff distance between the   cone below $A$ and  the computable sets.
\begin{proposition} Given an oracle set  $A$ let $\+ A = \{ Y \colon \, Y \leT A\}$. Let $\+ R \sub \+ A$ denote the collection of    computable sets. We have
\[ 1- \Gamma(A)= d_H(\+ A, \+ R). \] \end{proposition}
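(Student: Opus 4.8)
The plan is to peel off the three layers of definitions — the pseudodistance $d$, then $\gamma$ together with $d(\,\cdot\,,\+R)$, then $\Gamma$ together with $d_H$ — and to notice that at each layer a supremum of ``$1$ minus something'' becomes ``$1$ minus an infimum''.

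First I would record the elementary identity relating the density of agreement to the density of disagreement. For any set $S$ and any $n$, the set $\{k<n : Y(k)=S(k)\}$ is exactly the complement inside $[0,n)$ of $(Y\triangle S)\cap[0,n)$, so $\tfrac1n|\{k<n:Y(k)=S(k)\}| = 1 - \tfrac1n|(Y\triangle S)\cap[0,n)|$. Taking $\liminf_n$ and using $\liminf_n(1-a_n)=1-\limsup_n a_n$ gives
\[ \underline\eta(Y\leftrightarrow S) \;=\; 1 - \overline\eta(Y\triangle S)\;=\; 1 - d(Y,S). \]
Now feed this into the definition of $\gamma$. Since the index set ``$S$ computable'' is precisely $\+R$, and since $x\mapsto 1-x$ is an order-reversing bijection of $[0,1]$ (so that $\sup_i(1-x_i)=1-\inf_i x_i$ and $\inf_i(1-x_i)=1-\sup_i x_i$), we get $\gamma(Y)=\sup_{S\in\+R}\bigl(1-d(Y,S)\bigr)=1-\inf_{S\in\+R}d(Y,S)=1-d(Y,\+R)$. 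Applying the same rule once more, $\Gamma(A)=\inf_{Y\in\+A}\gamma(Y)=\inf_{Y\in\+A}\bigl(1-d(Y,\+R)\bigr)=1-\sup_{Y\in\+A}d(Y,\+R)$, i.e.\ $1-\Gamma(A)=\sup_{Y\in\+A}d(Y,\+R)$.

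It remains to identify the right-hand side with the Hausdorff pseudodistance. Because $\+R\subseteq\+A$, every computable $S$ lies in $\+A$, so $d(S,\+A)=0$ for all $S\in\+R$, and the second supremum in $d_H(\+A,\+R)=\max\bigl(\sup_{Y\in\+A}d(Y,\+R),\ \sup_{S\in\+R}d(S,\+A)\bigr)$ vanishes — exactly the reduction the paragraph before the proposition already flags for the case $\+U\supseteq\+W$. Hence $d_H(\+A,\+R)=\sup_{Y\in\+A}d(Y,\+R)=1-\Gamma(A)$, as claimed.

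I do not expect a genuine obstacle here: the argument is pure bookkeeping. The only two points that warrant a moment's care are the $\liminf/\limsup$ duality in the identity $\underline\eta(Y\leftrightarrow S)=1-d(Y,S)$, and the bookkeeping that the index set over which $\gamma$ takes its supremum is literally $\+R$, so that the infimum produced by flipping the supremum is exactly $d(Y,\+R)$ rather than an infimum over some larger or smaller family.
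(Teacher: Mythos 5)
Your proof is correct and is precisely the unwinding the paper has in mind: the paper offers no written argument (it calls the proposition ``clear from the definitions''), and your bookkeeping — the identity $\ul\eta(Y\lra S)=1-\ol\eta(Y\triangle S)$, the two sup/inf flips under $x\mapsto 1-x$, and discarding the second term of $d_H$ because $\+ R\sub \+ A$ — is exactly what makes it clear.
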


To interpret $1-\Delta(A)$ metrically, we note that $1-\delta(Y) = \sup_{S\in \+ R} d(Y,S)$. So we can   view $1-\Delta(A) $ as a  one-sided ``dual" of the Hausdorff pseudodistance: \bc  $d^*_H(\+ A, \+ R) =   \inf_{Y  \in \+ A} \sup_{S\in \+ R} d(Y, S)$. \ec
For instance, for the  unit disc $D \sub \R^2$ we have $d^*_H(D,D) = 1$. 

\subsection*{Analogs of  cardinal characteristics}  The operators  $\Gamma $ and $\Delta$ are  closely related to the analog in computability theory of  cardinal characteristics (see~\cite{Bartoszynski.Judah:book} for the background in set theory). Both the cardinal characteristics and their  analogs were  introduced   by Brendle and Nies  in the 2015 Logic Blog~\cite{LogicBlog:15}, building on the general framework of an analogy between set theory and computability theory set up by Rupprecht (\cite{Rupprecht:thesis}, also see \cite{Brendle.Brooke.ea:14}).  We only discuss the versions of the concepts in the setting of computability theory. 
\begin{definition}[Brendle and Nies]  For $p \in [0,1]$ let    \n $\+ D(\sim_p)$    be  the class of oracles $A$ that compute a set $X$ such that  $\gamma(X) \le p$, i.e.,   for each computable  set $S$, we have $\ul \eta (X \lra S) \le p$.  \end{definition} 
We note that by the definitions 
 $\Gamma(A) < p \RA A \in \+ D(\sim_p) \RA \Gamma(A) \le p$.   
 \begin{definition}[Brendle and Nies]  Dually, for $p \in [0,1/2)$ let \mbox{$\+ B(\sim_p)$}  be the class of oracles $A$ that compute a set $Y$  such that   for each computable  set $S$, we have $\ul \eta (S \lra Y) > p$.  \end{definition} 
 For each $p$  we have  
  $\Delta(A) > p \RA   A \in \+ B(\sim_p) \RA  \Delta(A) \ge p$. 

\subsection*{Collapse of the $\+ D(\sim_p)$  hierarchy for  $p \neq 0$  after Monin}
 \begin{definition}  \label{df:ioe} For a computable function $h$, we let $\+ D(\neq^*_h)$, or sometimes $\+ D(\neq^*, h)$,  denote  the class of oracles $A$ that compute a function $x $  such that  $\ex^\infty n \, x(n)=  y(n)$   for each computable function $y < h$.   \end{definition}
 This highness notion of an oracle set $A$  was introduced by Monin and Nies in \cite{Monin.Nies:15}, where it was called ``$h$-infinitely often equal". The  notion also corresponds to a cardinal characteristic, namely $\frd(\neq^*_h)$ which is a bounded version of the well-known characteristic  $\frd(\neq^*)$.  The cardinal $\frd(\neq^*_h)$ is the least size of a set $G$ of $h$-bounded functions so that for each function $x$ there is a function $y$ in   $G$  such that    $\fa^\infty n [ x(n) \neq y(n)]$.    We note that $\+ D(\neq^*)$, i.e.\ the class obtained in Definition~\ref{df:ioe} when we omit the computable bound, coincides with having hyperimmune degree. See~\cite{Brendle.Brooke.ea:14} for background, and in particular for    motivation why  the defining condition for $\frd(\neq^*_h)$ looks like the negation of the condition for $\+ D(\neq^*_h)$.
 
 The proof of the  following fact provides a glimpse of the methods used to prove that the $\+ D(\sim_p)$  hierarchy collapses.
 \begin{prop} \label{prop: D} $\+ D(\neq^*, 2^{n!} )\sub \+ D(\sim_0)$.  \end{prop}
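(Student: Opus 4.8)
The plan is to convert the infinitely-often-equality witness $x \leT A$ into a set $X \leT A$ that every computable set approximates with lower agreement density $0$; such an $X$ witnesses $A \in \+ D(\sim_0)$ directly from the definition. First I would normalise $x$: since the functions $y$ relevant to the hypothesis are bounded by $h(n) = 2^{n!}$, we may assume $x(n) < 2^{n!}$ for every $n$, replacing $x$ by $n \mapsto x(n) \bmod 2^{n!}$, which keeps $x$ computable from $A$ and preserves infinitely-often-equality with each $y < h$ (at any $n$ with $x(n) = y(n)$ one already has $x(n) < h(n)$). Next, partition $\NN$ into consecutive blocks $I_0, I_1, \dots$ with $|I_m| = m!$; write $a_m = \min I_m = \sum_{j<m} j!$ and $b_m = \max I_m + 1 = a_m + m!$. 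Identifying $x(m) < 2^{m!}$ with the binary string of length $m!$ that it codes, set $X \restriction I_m$ equal to that string. Then $X \leT x \leT A$, so it remains only to show $\gamma(X) = 0$.

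Fix a computable set $S$; I need $\ul\eta(X \lra S) = 0$. Consider the computable function $y_S$ where $y_S(m) < 2^{m!}$ codes the bitwise complement of the string $S \restriction I_m$. Then $y_S < h$, so the hypothesis $A \in \+ D(\neq^*, 2^{n!})$ yields infinitely many $m$ with $x(m) = y_S(m)$. For each such $m$, $X$ and $S$ disagree at every position of $I_m$, so the agreement set $\{k : X(k) = S(k)\}$ meets $[0, b_m)$ only inside $[0, a_m)$; hence $|\{k < b_m : X(k) = S(k)\}| \le a_m$, and the agreement density at the threshold $b_m$ is at most $a_m / b_m$.

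To finish I would observe that $a_m / b_m \to 0$: since $a_m = \sum_{j<m} j! = (1 + o(1))\,(m-1)!$ while $b_m \ge m! = m\,(m-1)!$, we have $a_m / b_m = O(1/m)$. As there are infinitely many such thresholds $b_m$, it follows that $\liminf_n |\{k < n : X(k) = S(k)\}| / n = 0$, i.e.\ $\ul\eta(X \lra S) = 0$. Since $S$ was an arbitrary computable set, $\gamma(X) = \sup_S \ul\eta(X \lra S) = 0$, so $A \in \+ D(\sim_0)$.

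The single point requiring care — and the reason the bound $2^{n!}$ appears — is the choice of block length. The blocks must be short enough that the ``complement pattern'' function $y_S$ stays below $h$, which forces $|I_m| \le m!$, yet long enough relative to their starting position that one block on which $X$ disagrees with $S$ everywhere drives the agreement density below any $\varepsilon$, which forces $|I_m|/a_m \to \infty$. Factorial block lengths meet both demands simultaneously (merely exponential lengths would give $a_m/b_m \to 1/2$ and fail), and beyond this bookkeeping the construction is a plain one-block-at-a-time definition with no priority or injury.
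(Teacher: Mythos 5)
Your proof is correct and is essentially the paper's argument: code $x(m)<2^{m!}$ as a length-$m!$ block, concatenate the blocks to get $X\leT A$, and use a computable function below $2^{n!}$ coding the target set blockwise so that infinitely many whole blocks of complete disagreement force the lower agreement density to $0$ via $\sum_{j<m}j!/\,(\sum_{j<m}j!+m!)\to 0$. The only (immaterial) difference is that you place the bitwise complement in the auxiliary function $y_S$ rather than in $X$ itself, and you add the harmless normalisation $x(n)\bmod 2^{n!}$, which the paper leaves implicit.
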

 \begin{proof} Suppose that $A \in \+ D(\neq^*, 2^{n!} )$ via a function $x \leT A$. Since $x(n) < 2^{n!} $ we can view $x(n)$ as a binary string of length $n!$. Let $L(x) \in \cantor$ be the concatenation of the strings $x(0), x(1), \ldots$, and let $X \leT A$ be the complement of $L(x)$. Given a computable set $S$, there is a computable function $y $ with $y(n) < 2^{n!}$ such that $L(y)= S$. Let $H(n)= \sum_{i < n} i!$. Since $x(n) = y(n)$ for infinitely many $n$, there are infinitely many intervals $[H(n), H(n+1))$ on which $X$ and $S$ disagree completely. Since $\lim_n n!/H(n)=0$ this implies $\ul \eta (X \lra S) =0$.  Hence $A \in \+ D(\sim_0)$. 
 \end{proof}
 We slightly paraphrase the main result of Monin's recent work \cite{LogicBlog:16}. It not only collapses the $\+ D(\sim_p)$  hierarchy, but also describes the resulting highness property combinatorially.

\begin{thm}[Monin]  $\+ D(p)= \+ D(\neq^*, \tp{(\tp n)})$  for each  $p\in (0,1/2)$. In particular, $\Gamma(A) < 1/2 \RA \Gamma(A) = 0$ so  only the values $0$ and $1/2$ can occur when  $\Gamma $ is evaluated on incomputable sets. \end{thm}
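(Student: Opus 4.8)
The assertion combines a class equality with a corollary about the values of $\Gamma$; the corollary follows from the equality together with facts already in the excerpt, so I record it first. Suppose $\Gamma(A)<1/2$ and fix a rational $p$ with $\Gamma(A)<p<1/2$. By the recorded implication $\Gamma(A)<p\Rightarrow A\in\+ D(\sim_p)$ we have $A\in\+ D(\sim_p)$, and since the equality reads $\+ D(\sim_q)=\+ D(\neq^*,2^{2^n})$ for every rational $q\in(0,1/2)$ we get $A\in\+ D(\sim_q)$ for all such $q$; now $A\in\+ D(\sim_q)\Rightarrow\Gamma(A)\le q$, so letting $q\downarrow 0$ forces $\Gamma(A)=0$. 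Together with the cited $\Gamma(A)>1/2\Leftrightarrow A$ computable, this leaves only $\Gamma(A)\in\{0,1/2\}$ on incomputable inputs. Everything therefore rests on the equality $\+ D(\sim_p)=\+ D(\neq^*,2^{2^n})$ for a fixed $p\in(0,1/2)$, which I would establish by two inclusions.

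For $\+ D(\sim_p)\subseteq\+ D(\neq^*,2^{2^n})$: given $X\leT A$ with $\gamma(X)\le p$, the hypothesis says that $X$ has \emph{large Hamming distance from every computable set} along infinitely many initial segments, whereas the conclusion wants an $A$-computable function that is, infinitely often, \emph{exactly equal} to each computable target below $2^{2^n}$. I would bridge this by coding. Fix a computable sequence of binary codes $C_n$, of length $\ell_n=\Theta(2^n)$, with $|C_n|=2^{2^n}$ and minimum distance large --- a Gilbert--Varshamov count makes such codes exist because the binary entropy of the relevant density is below $1$ for $p<1/2$ --- and set $x(n)$ to be the unique codeword of $C_n$ within a prescribed Hamming radius of the block $X\uh I_n$ (and $0$ if none exists); the minimum-distance condition guarantees uniqueness, and $x\leT A$. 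For a computable $y<2^{2^n}$, apply $\gamma(X)\le p$ to the computable set whose $I_n$-block is the bit-complement of the $y(n)$-th codeword: on the infinitely many initial segments where $X$ agrees with that set on a vanishing fraction, $X\uh I_n$ is forced within the prescribed radius of precisely that codeword, so $x(n)=y(n)$ there. The delicate point is that the hypothesis gives small agreement on an \emph{arbitrary} initial segment, not one ending at a block boundary, so one must localise it (which costs a constant factor, shrinking the range of $p$ covered) or run the argument scale-by-scale with one code per length; reaching all of $(0,1/2)$ this way is the real work in this direction.

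For $\+ D(\sim_p)\supseteq\+ D(\neq^*,2^{2^n})$: given an infinitely-often-equal $x\leT A$ below $2^{2^n}$, I want $X\leT A$ with $\gamma(X)\le p$. The obvious template is that of Proposition~\ref{prop: D}: lay $\NN$ out in consecutive blocks, define $X$ on block $n$ to be the bit-complement of a decoding of $x(n)$, and for computable $S$ feed in the target that writes $S$ block-by-block, so that each coincidence $x(n)=y_S(n)$ produces a block of total disagreement with $S$. But $x(n)$ carries only $2^n$ bits while the initial segment ending at block $n$ has about $2^{n+1}$ positions, so a matched block is just half of it; a short bookkeeping shows that this static construction cannot do better than $\gamma(X)\le\tfrac12$, and that $2^{2^n}$ is exactly the bound at which a matched block reaches a $\tfrac12$-fraction (a slower-growing bound does worse, and a faster one would give a stronger conclusion but is not available, since infinitely-often-equalness below $2^{2^n}$ does not buy it below a larger bound). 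Producing $X$ with $\gamma(X)$ \emph{strictly} below $\tfrac12$ therefore needs a genuinely different construction --- one that secures, infinitely often, an initial segment that is a $(1-p)$-fraction disagreement, which cannot come from one matched block and cannot be faked by forcing a run of consecutive blocks to be matched, since the oracle only supplies matches at infinitely many coordinates, not at infinitely many runs of consecutive ones. I expect this step --- presumably a forcing or priority construction steered by $x$ rather than a fixed layout --- to be the main obstacle of the theorem. Finally, the two inclusions together show that $\+ D(\sim_p)$ does not depend on $p\in(0,1/2)$ and equals $\{A:\Gamma(A)=0\}$, which is exactly the collapse the ``in particular'' asserts.
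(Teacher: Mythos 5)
You have not given a proof of the theorem; what you have is a correct derivation of the ``in particular'' clause from the class equality (that part is exactly right: sandwich $\Gamma(A)<p\Rightarrow A\in\+ D(\sim_p)\Rightarrow\Gamma(A)\le p$, plus the equality for all rational $p\in(0,1/2)$, gives the collapse), together with a sketch of the equality in which both inclusions are left open and flagged by you as ``the real work'' and ``the main obstacle''. For calibration: the paper itself does not prove this theorem either --- it is a survey statement attributed to Monin, with the proof in the cited Logic Blog 2016 --- but it does name the one ingredient it considers essential, the \emph{list decoding capacity theorem} (codewords of length $n$ at rate $\beta<1$ with constant list size $L$ at decoding radius close to $n/2$), and that is precisely what your sketch is missing.

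Concretely, in the direction $\+ D(\sim_p)\subseteq\+ D(\neq^*,\tp{(\tp n)})$ your Gilbert--Varshamov/unique-decoding plan cannot reach all $p\in(0,1/2)$, and not merely for bookkeeping reasons: unique decoding within relative radius $p$ forces relative minimum distance $>2p$, so for $p\ge 1/4$ the Plotkin bound caps the code at $O(\ell)$ codewords of length $\ell$, while you need $\tp{(\tp n)}=2^{\Omega(\ell_n)}$ of them, i.e.\ constant rate; moreover your own localisation problem (the liminf is over arbitrary initial segments, and with geometrically growing blocks an agreement ratio $<1/2-\delta$ at an arbitrary point transfers only to roughly $3/4$ at the nearest block boundary) eats a further constant factor. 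The fix is list decoding at radius $\frac12-\delta$ with constant list size $L$ and positive rate, plus an additional step to convert an $L$-element list of candidates into a single $A$-computable function that is infinitely often equal to every computable target below $\tp{(\tp n)}$ --- none of which appears in your sketch. In the converse direction $\+ D(\neq^*,\tp{(\tp n)})\subseteq\+ D(\sim_p)$ for arbitrarily small $p>0$ you offer no construction at all, only a (correct) explanation of why the static block layout of Proposition~\ref{prop: D} stalls at density $1/2$; since the theorem needs this inclusion for every $p\in(0,1/2)$ in order to yield $\Gamma(A)=0$, this is a genuine gap, not a detail. As it stands the proposal identifies where the difficulties lie but establishes neither inclusion, so it does not prove the stated result.
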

 The proof uses  the list decoding capacity theorem from the theory of error-correcting codes, which says that given a sufficiently large constant $L$,  a fairly large set of code words of a   length $n$ can be achieved if one allows that each word of length $n$ can be close (in the Hamming distance) to up to  $L$ of them. More precisely, independently of $n$, for each  positive $\beta <1$ there is $L \in \omega$ so that $\tp{\lfloor \beta n\rfloor}$ codewords can be achieved. (In the usual setting of error correction, one would have $L=1$, namely, each word is close to only one code word.)

 \subsection*{Collapse of the $\+ B(\sim_p)$ hierarchy for  $p \neq 0$  via a dual of Monin}
 
 \begin{definition}  For a computable function $h$, we let $\+ B(\neq^*_h)$ denote  the class of oracles $A$ that compute a function $y <  h$  such that   $\fa^\infty n \, x(n) \neq y(n)$  for each computable function $x$.   \end{definition}
    $\+ B(\neq^*)$, i.e.\ the class obtained   when we omit the computable bound, coincides with ``high or diagonally noncomputable"  (again see, e.g., \cite{Brendle.Brooke.ea:14}). As a dual to Proposition~\ref{prop: D} we have $ \+ B(\sim_0) \sub \+ B(\neq^*, 2^{n!} )$.
 
\begin{thm}[Nies]  $\+ B(\sim_p)= \+ B(\neq^*, \tp{(\tp n)})$  for each  $p\in (0,1/2)$. In particular, $\Delta(A) >0 \RA \Delta(A) = 1/2$ so there are only two possible $\Delta $ values. \end{thm}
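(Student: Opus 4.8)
The plan is to prove the equivalence as the dual of Monin's theorem above. The ``in particular'' clause is then immediate: if $\Delta(A)>0$, pick $p\in(0,\Delta(A))$; since, as recorded earlier, $\Delta(A)>p$ entails $A\in\+ B(\sim_p)$, we get $A\in\+ B(\sim_p)=\+ B(\neq^*,\tp{(\tp n)})=\+ B(\sim_q)$ for every $q\in(0,1/2)$, hence $\Delta(A)\ge q$ for all such $q$, so $\Delta(A)=1/2$. It remains to establish, for each $p\in(0,1/2)$, the two inclusions $\+ B(\neq^*,\tp{(\tp n)})\subseteq\+ B(\sim_p)$ and $\+ B(\sim_p)\subseteq\+ B(\neq^*,\tp{(\tp n)})$, which correspond under the $\+ D$/$\+ B$ duality to the two halves of Monin's proof; I would obtain each by dualising the corresponding half.

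\emph{From an eventually different function to a far set.} Starting from $y\leT A$ with $y(n)<\tp{(\tp n)}$ that is eventually different from every computable function, I would build $Y\leT A$ with $\ul\eta(S\lra Y)>p$ for all computable $S$. Fix $\rho$ with $p<\rho<1/2$; by the list decoding capacity theorem choose, for each $n$, a computable code $E_n$ with $\tp{(\tp n)}$ codewords of length $m_n=\Theta(\tp n)$ that is $(\rho,L)$-list-decodable for a constant $L$, and write the codewords $E_n(y(n))$ into consecutive blocks to define $Y$. Given a computable $S$, the map sending a block to the at most $L$ codeword indices within relative Hamming distance $\rho$ of $S$ on that block is computable, and so are its $L$ coordinate functions (padded outside the range); since $y$ is eventually different from each of them, for all large $n$ the codeword $E_n(y(n))$ is at relative distance $>\rho$ from $S$ on block $n$, which should force $\ul\eta(S\lra Y)\ge\rho>p$.

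\emph{From a far set to an eventually different function.} Conversely, from $Y\leT A$ with $\ul\eta(S\lra Y)>p$ for all computable $S$ I would decode: take a code $E_n$ with $\tp{(\tp n)}$ codewords of length $\Theta(\tp n)$ that is $(p,L)$-list-decodable, let $y(n)\leT A$ record the resulting list of codewords within relative distance $p$ of $Y$ on block $n$, and pack this list into a single value below $\tp{(\tp n)}$, absorbing the constant $L$ by a harmless shift of the index. If $y$ failed to be eventually different from some computable $x$, then on infinitely many blocks $Y$ would lie within relative distance $p$ of the computable codeword $E_n(x(n))$; gluing those codewords into a computable $S$ would give $\ul\eta(S\lra Y)\le p$, contradicting $Y\in\+ B(\sim_p)$. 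This direction subsumes the crude inclusion $\+ B(\sim_0)\subseteq\+ B(\neq^*,2^{n!})$, which comes out of the trivial ``complement'' coding on factorially long blocks (where a single good block already forces density $0$); the genuine list decoding codes are what bring the bound down from $2^{n!}$ to $\tp{(\tp n)}$, at the price of $\ul\eta\le p$ in place of $\ul\eta=0$ --- this is where $p>0$ is used.

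The step I expect to be the main obstacle --- and it is exactly the delicate heart of Monin's argument, to be carried over in dual form --- is that $\ul\eta$ is a limit inferior, so the per-block distance estimates must be strengthened to estimates valid at \emph{every} sufficiently long initial segment of $Y$. Because the rate constraint on the codes forces the block lengths to grow only geometrically, the running disagreement density would otherwise dip back toward $0$ whenever one enters a fresh large block; I would handle this by organising the codewords across scales in a nested, tree-code-like fashion and applying the list decoding capacity theorem at each scale, so that any initial segment of $Y$ is a prefix of a codeword to which list decoding still applies. With that in place, the remaining bookkeeping is the routine dual of Monin's.
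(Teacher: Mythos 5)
The paper itself contains no proof of this theorem (it defers to the 2016 Logic Blog), so your sketch can only be judged on its own terms, and as it stands it has a genuine gap in each inclusion --- and in both cases the gap is precisely the content of the theorem. The structural problem: with value bound $\tp{(\tp n)}$, any block coding must use blocks of length at least $2^n$, so each block is a constant fraction (roughly half) of the initial segment it completes. Information about a single block can therefore move the running agreement density only into a neighbourhood of $1/2$; it can never push it below $p$ or above $1-p$ for an arbitrary fixed $p<1/2$. In the direction $\+ B(\sim_p)\subseteq\+ B(\neq^*,\tp{(\tp n)})$ this is fatal to your verification: even granting the packing trick, if a computable $x$ agrees with your $y$ infinitely often, gluing the matched codewords (or their complements) into a computable $S$ gives per-block agreement with $Y$ of at least $1-p$ (resp.\ at most $p$), hence running agreement at those block ends of only about $(1\pm p)/2$ --- which is perfectly consistent with $\ul \eta(S \lra Y)>p$, so no contradiction and no eventual difference. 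There is also a more elementary defect: the list of codewords within relative distance $p$ of $Y$ on block $n$ may be empty (or otherwise predictable) for all large $n$, in which case a computable function matches $y$ infinitely often without producing any set close to $Y$; the definition of $y$ needs repair before the density issue even arises. Your closing claim that the crude $2^{n!}$ argument survives list decoding ``at the price of $\ul\eta\le p$'' is exactly the step that is not true as stated.

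In the converse direction $\+ B(\neq^*,\tp{(\tp n)})\subseteq\+ B(\sim_p)$ the same issue appears and you do flag it: per-block distance $>\rho$ yields only $\ul\eta(S\lra Y)\gtrsim\rho/2$ once partial blocks are accounted for, so the asserted bound $\ge\rho>p$ fails for $p$ near $1/2$, and your proposed remedy (list decodability of prefixes of the code at several scales within each block) is the right kind of idea but is entirely unexecuted --- it is where the substance lies. You also elide a needed point there: distance $>\rho$ from $S$ on a block only bounds agreement with $S$ \emph{above} by $1-\rho$; the lower bound $>\rho$ comes from applying the same argument to $\ol S$, i.e.\ from closure of the computable sets under complementation. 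What is correct and matches the paper: the reduction of the ``in particular'' clause to the equivalence via $\Delta(A)>p\Rightarrow A\in\+ B(\sim_p)\Rightarrow\Delta(A)\ge p$, the packing of constant-size lists below a shifted bound, and the identification of the list decoding capacity theorem and the duality with Monin as the relevant tools. But the two inclusions themselves are not proved: the set-to-function direction needs a mechanism exploiting matches on whole initial segments or on many blocks simultaneously rather than on one block, and the function-to-set direction needs the multi-scale coding carried out in detail.
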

For a proof see again \cite{LogicBlog:16}.

\section{Open questions} 
The development   we have sketched in Sections~\ref{s:analysis} and~\ref{s:LDT-K}    has led to two randomness notions. The first, density randomness, was born out of the  study of  randomness   via computable analysis. The second,   OW-randomness, was born out of the study of lowness via  randomness. We know that OW-randomness implies density randomness. 
 \begin{question} \label{qu:OW} Do OW-randomness and density randomness coincide? \end{question}
One direction of attack to answer this negatively could be  to look at other properties of points  that  are implied by OW-randomness, and show that density randomness does not suffice. By \cite[Thm.\ 6.1]{Miyabe.Nies.Zhang:15} OW-randomness of $z$ implies the existence of the limit in the sense of the Birkhoff ergodic theorem (Section~\ref{s:analysis})  for computable operators $T$ on a computable probability space $(\cantor, \mu)$,  and lower semicomputable functions $g\colon X \to \RR$. For another example, by \cite{Galicki.Nies:16} OW-randomness of $z$ also implies  an effective version  of the Borwein-Ditor theorem: if $\seq {r_i}\sN i$ is a computable null sequence of reals and $z\in \+ C$ for a $\PPI$ set $\+ C\sub \R$, then $z+r_i \in \+ C$ for infinitely many $i$. 

Lowness for density randomness coincides with $K$-triviality by \cite[Thm. 2.6]{Miyabe.Nies.Zhang:15}. Lowness for   OW randomness is merely known to imply $K$-triviality  for the reasons discussed in Section~\ref{s:redness}; further, an incomputable c.e.\ set that is low for OW-randomness has been constructed in unpublished worked  with Turetsky.

\begin{question} Characterise lowness for OW-randomness. Is it the same as $K$-triviality? \end{question}

Section~\ref{s:gamma_delta} leaves open several questions.

\begin{question} Is $\+ D(\sim_0)$ a proper subclass of $\+ D(\neq^*, \tp{(\tp n)})= \+ D(1/4)$?

\n  Is $\+ D(\neq^*, \tp{n!})$ a proper subclass of $\+ D(\neq^*, \tp{(\tp n)})$?\end{question}
By Proposition~\ref{prop: D} an affirmative answer to the first part implies an affirmative answer to the second.  The dual open questions are:

\begin{question} Is $\+ B(\neq^*, \tp{(\tp n)})= \+ B(\sim_{0.25})$ a proper subclass of $\+ B(\sim_0)$? 

\n Is  it a proper subclass of $\+ B(\neq^*, \tp{(n!)})$?\end{question}

\subsection*{Acknowledgement} Most of the research surveyed in  this article was supported by the Marsden Fund of New Zealand.

\def\cprime{$'$}

%
%
\end{document}